\definecolor{webgreen}{rgb}{0,.5,0}
\definecolor{webbrown}{rgb}{.6,0,0}
\newtheorem{theorem}{Theorem}
\newenvironment{proof}[1][Proof]{\noindent\textbf{#1.} }{\ \rule{0.5em}{0.5em}}
\begin{document}

\begin{center}
\vskip1cm

{\LARGE \textbf{A representation for the integral kernel of the composition
of multivariate Bernstein--Durrmeyer operators}}

\vspace{2cm}

{\large Ulrich Abel}\\[0pt]
\textit{Technische Hochschule Mittelhessen}\\[0pt]
\textit{Fachbereich MND}\\[0pt]
\textit{Wilhelm-Leuschner-Stra\ss e 13, 61169 Friedberg }\\[0pt]
\textit{Germany}\\[0pt]
\href{mailto:Ulrich.Abel@mnd.thm.de}{\texttt{Ulrich.Abel@mnd.thm.de}}

\vspace{1cm}

{\large Ana Maria Acu}\\[0pt]
\textit{Lucian Blaga University of Sibiu}\\[0pt]
\textit{Department of Mathematics and Informatics}\\[0pt]
\textit{Romania}\\[0pt]
\textit{Str. Dr. Ion Ra\c tiu, No. 5-7, 550012 Sibiu}\\[0pt]
\textit{Romania}\\[0pt]
\href{mailto:anamaria.acu@ulbsibiu.ro}{\texttt{anamaria.acu@ulbsibiu.ro}}

\vspace{1cm}

{\large Margareta Heilmann}\\[0pt]
\textit{University of Wuppertal}\\[0pt]
\textit{School of Mathematics and Natural Sciences}\\[0pt]
\textit{Gau\ss stra\ss e 20, 42119 Wuppertal}\\[0pt]
\textit{Germany}\\[0pt]
\href{mailto:heilmann@math.uni-wuppertal.de}{\texttt{%
heilmann@math.uni-wuppertal.de}}

\vspace{1cm}

{\large Ioan Ra\c sa}\\[0pt]
\textit{Technical University of Cluj-Napoca}\\[0pt]
\textit{Faculty of Automation and Computer Science, Department of Mathematics%
}\\[0pt]
\textit{Str. Memorandumului nr. 28, 400114 Cluj-Napoca}\\[0pt]
\textit{Romania}\\[0pt]
\href{mailto:ioan.rasa@math.utcluj.ro}{\texttt{ioan.rasa@math.utcluj.ro}}
\end{center}

\vspace{2cm}

\newpage

\begin{abstract}
This paper presents a representation for the kernel of the composition of
multivariate Bernstein--Durrmeyer operators for functions defined on the
standard simplex in $\mathbb{R}^{d}$.
\end{abstract}

\bigskip

\bigskip

\smallskip \emph{Mathematics Subject Classification (2020):} 41A36 

\smallskip \emph{Keywords:} Approximation by positive operators, composition
of positive linear operators, kernels of multivariate Bernstein--Durrmeyer
operators.

\vspace{2cm}

\section{Introduction}

\label{intro}

In their recent paper \cite{AAHR-Kernels for composition-JMAA-2026} the
authors studied the composition of Bernstein--Durrmeyer operators $%
M_{n}:L_{1}[0,1]\longrightarrow \Pi _{n}$ defined by 
\begin{eqnarray*}
(M_{n}f)(x) &=&\int_{0}^{1}f(y)K_{n}(x,y)dy,\,\,x\in \lbrack 0,1], \\
K_{n}(x,y) &=&(n+1)\sum_{k=0}^{n}p_{n,k}(x)p_{n,k}(y),\,p_{n,k}(x)=\binom{n}{%
k}x^{k}(1-x)^{n-k},
\end{eqnarray*}%
focusing on the structure and properties of the associated kernel functions.
They established new identities for the kernel arising from the composition
of two and three operators. For the composition of $2$ classical
Bernstein--Durrmeyer operators one has the following formula \cite[Theorem~1]%
{AAHR-Kernels for composition-JMAA-2026}:\ 

\begin{theorem}
\label{theorem-beautyful-representation-r=2}For $m,n\geq 0$, the kernel of
the composition of classical Bernstein--Durrmeyer operators $M_{m}\circ
M_{n} $ has the representation 
\begin{equation*}
K_{m,n}\left( x,y\right) =\frac{\left( m+1\right) !\left( n+1\right) !}{%
\left( m+n+1\right) !}\sum_{k=0}^{\min \{m,n\}}\binom{m}{k}\binom{n}{k}%
\sum_{\ell =0}^{k}p_{k,\ell }\left( x\right) p_{k,\ell }\left( y\right) .
\end{equation*}
\end{theorem}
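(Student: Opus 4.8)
The plan is to realize $K_{m,n}$ as a single product integral over the inner variable, to diagonalize the two factors simultaneously, and then to reduce the matching of coefficients to a Chu--Vandermonde convolution. First I would write the composition as an iterated integral and apply Fubini (every integrand is a polynomial on $[0,1]^2$, hence integrable) to obtain
$$
K_{m,n}(x,y)=\int_0^1 K_m(x,t)\,K_n(t,y)\,dt .
$$
A purely computational route is available here: inserting the definitions and evaluating the inner integral by the Beta function, $\int_0^1 p_{m,i}(t)p_{n,j}(t)\,dt=\binom{m}{i}\binom{n}{j}(i+j)!\,(m+n-i-j)!/(m+n+1)!$, produces the explicit double sum $K_{m,n}(x,y)=\frac{(m+1)(n+1)}{(m+n+1)!}\sum_{i,j}\binom{m}{i}\binom{n}{j}(i+j)!(m+n-i-j)!\,p_{m,i}(x)p_{n,j}(y)$. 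However, passing from this $(i,j)$-sum to the nested $(k,\ell)$-sum of the statement by matching coefficients directly in the Bernstein basis leads to an awkward two-index binomial summation, so I would instead diagonalize.

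Second, I would invoke the classical fact (Derriennic) that $M_n$ is diagonalized by the Legendre polynomials: if $\{P_k\}_{k\ge0}$ is the Legendre system on $[0,1]$, orthonormal for Lebesgue measure, then $K_n(x,y)=\sum_{k=0}^n\lambda_{n,k}P_k(x)P_k(y)$ with eigenvalues $\lambda_{n,k}=\dfrac{n!\,(n+1)!}{(n-k)!\,(n+k+1)!}$. Substituting these spectral expansions into the product integral and using orthonormality, $\int_0^1 P_k(t)P_l(t)\,dt=\delta_{kl}$, collapses the $t$-integration and truncates the sum at $\min\{m,n\}$, giving
$$
K_{m,n}(x,y)=\sum_{k=0}^{\min\{m,n\}}\lambda_{m,k}\lambda_{n,k}\,P_k(x)P_k(y).
$$
On the other side, since $(k+1)\sum_{\ell=0}^k p_{k,\ell}(x)p_{k,\ell}(y)=K_k(x,y)=\sum_{j=0}^k\lambda_{k,j}P_j(x)P_j(y)$, the right-hand side of the theorem can likewise be expanded as a single sum over the linearly independent products $P_j(x)P_j(y)$.

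Third, I would equate the coefficients of $P_j(x)P_j(y)$ on the two sides. After interchanging the $k$- and $j$-summations, the whole theorem reduces to the scalar identity
$$
\lambda_{m,j}\lambda_{n,j}=\frac{(m+1)!\,(n+1)!}{(m+n+1)!}\sum_{k=j}^{\min\{m,n\}}\binom{m}{k}\binom{n}{k}\frac{\lambda_{k,j}}{k+1}.
$$
Inserting the explicit eigenvalues, cancelling factorials, and setting $M=m-j$, $N=n-j$, $J=2j+1$, the identity collapses to $\sum_{s}\binom{M+J}{M-s}\binom{N}{s}=\binom{M+N+J}{M}$, which is exactly Chu--Vandermonde. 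I expect this last reduction to be the main obstacle: not the Vandermonde step itself, which is routine, but the bookkeeping needed to bring the ratio of eigenvalues and binomial coefficients into clean convolution form, together with the verification that the summation ranges line up so that no boundary terms are lost. The fully elementary alternative (Beta integral plus direct reorganization of the $(i,j)$-sum) avoids the spectral input but replaces this single convolution by a heavier double summation.
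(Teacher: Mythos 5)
Your argument is correct, and the deferred bookkeeping does close: with $\lambda_{k,j}/(k+1)=k!\,k!/\bigl((k-j)!\,(k+j+1)!\bigr)$ the scalar identity you isolate becomes, after multiplying through by $(m-j)!\,(n+j+1)!$ and substituting $M=m-j$, $N=n-j$, $J=2j+1$, exactly $\sum_{s}\binom{M}{s}\binom{N+J}{N-s}=\binom{M+N+J}{N}$, i.e.\ Chu--Vandermonde, so no boundary terms are lost. However, your route is genuinely different from the paper's. The paper does not pass through the eigenstructure at all: it works directly in the Bernstein basis, evaluates $\int B_{\alpha}B_{\beta}$ by a Beta-type integral, handles the resulting inner sum $\sum_{\alpha}B_{\alpha}(y)(\alpha+\beta)!/\alpha!$ with a differentiation/generating-function device ($\mathbf{D}^{\beta}$ applied to $\mathbf{t}^{\beta}(\sum y_{\nu}t_{\nu})^{n}$), and then finishes with a (multinomial) Vandermonde convolution --- that is, it executes precisely the ``elementary alternative'' you set aside as heavier, but the differentiation trick is what tames the double sum you were worried about. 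Your spectral route is arguably shorter in one variable, but note two things. First, the expansion $K_{m,n}(x,y)=\sum_{k}\lambda_{m,k}\lambda_{n,k}P_{k}(x)P_{k}(y)$ you start from is exactly the ``Legendre representation'' displayed in the paper right after the theorem statement, so your proof really establishes the equivalence of the two displayed representations rather than deriving the new one from scratch; it imports Derriennic's diagonalization as external input, which the paper's computation does not need. Second, the paper's choice of the direct route is motivated by the multivariate generalization that is its actual subject: on the simplex $\mathbb{S}^{d}$ the eigenvalues of $M_{n}$ still depend only on total degree but the eigenspaces have dimension greater than one, so your argument would have to be rewritten in terms of reproducing kernels of those eigenspaces and a less transparent coefficient comparison, whereas the Bernstein-basis computation carries over verbatim with multinomial coefficients. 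So: correct proof, different and somewhat less self-contained mechanism, and one that does not scale to the setting the paper is ultimately after.
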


Like the well-known representation in terms of Legendre polynomials, which
can be written in the form 
\begin{eqnarray*}
K_{m,n}\left( x,y\right) &=&\sum_{k=0}^{\min \left\{ m,n\right\} }\frac{m^{%
\underline{k}}}{\left( m+k+1\right) ^{\underline{k}}}\frac{n^{\underline{k}}%
}{\left( n+k+1\right) ^{\underline{k}}}\left( 2k+1\right) \\
&&\times \sum_{i=0}^{k}\left( -1\right) ^{i}\binom{k}{i}p_{k,i}\left(
x\right) \sum_{j=0}^{k}\left( -1\right) ^{j}\binom{k}{j}p_{k,j}\left(
y\right)
\end{eqnarray*}%
(see \cite{AAHR-Kernels for composition-JMAA-2026}), Theorem~\ref%
{theorem-beautyful-representation-r=2} shows the commutativity of these
operators naturally. While the above Legendre representation contains all
possible products $p_{k,i}\left( x\right) p_{k,j}\left( y\right) $, $0\leq
i,j\leq k\leq n$, of Bernstein basis polynomials, the new representation has
the beautiful property to contain only products $p_{k,\ell }\left( x\right)
p_{k,\ell }\left( y\right) $, $0\leq \ell \leq k\leq n$, where $n$ is the
smallest degree of the Bernstein--Durrmeyer polynomials involved. This fact
immediately implies that the composition can be written as a linear
combination of the operators themselves.

For the composition of $3$ classical Bernstein--Durrmeyer operators the
authors showed the following formula \cite[Theorem~2]{AAHR-Kernels for
composition-JMAA-2026}:\ 

\begin{theorem}
\label{theorem-beautyful-representation-r=3}The kernel of the composition of 
$3$ Bernstein--Durrmeyer operators $M_{n_{3}}\circ M_{n_{2}}\circ M_{n_{1}}$
has the representation 
\begin{align*}
K_{n_{3},n_{2},n_{1}}\left( x,y\right) & =\frac{\left( n_{3}+1\right)
!\left( n_{2}+1\right) !\left( n_{1}+1\right) !\left(
n_{3}+n_{2}+n_{1}+1\right) !}{\left( n_{3}+n_{2}+1\right) !\left(
n_{3}+n_{1}+1\right) !\left( n_{2}+n_{1}+1\right) !} \\
& \times \sum_{k=0}^{\min \{n_{1},n_{2},n_{3}\}}\frac{\binom{n_{3}}{k}\binom{%
n_{2}}{k}\binom{n_{1}}{k}}{\binom{n_{3}+n_{2}+n_{1}+1}{k}}\sum_{\ell
=0}^{k}p_{k,\ell }\left( x\right) p_{k,\ell }\left( y\right) .
\end{align*}
\end{theorem}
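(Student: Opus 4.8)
The plan is to reduce the three-fold case to two applications of Theorem~\ref{theorem-beautyful-representation-r=2}, exploiting the composition rule for integral kernels. Since $M_{n_{3}}\circ M_{n_{2}}\circ M_{n_{1}}=\left( M_{n_{3}}\circ M_{n_{2}}\right) \circ M_{n_{1}}$, the associated kernel factorizes as
\[
K_{n_{3},n_{2},n_{1}}\left( x,y\right) =\int_{0}^{1}K_{n_{3},n_{2}}\left(
x,t\right) K_{n_{1}}\left( t,y\right) dt.
\]
It is convenient to abbreviate $G_{k}\left( x,y\right) =\sum_{\ell =0}^{k}p_{k,\ell }\left( x\right) p_{k,\ell }\left( y\right) $, so that $K_{k}=\left( k+1\right) G_{k}$ and Theorem~\ref{theorem-beautyful-representation-r=2} reads $K_{m,n}=\frac{\left( m+1\right) !\left( n+1\right) !}{\left( m+n+1\right) !}\sum_{k}\binom{m}{k}\binom{n}{k}G_{k}$.

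First I would insert the Theorem~\ref{theorem-beautyful-representation-r=2} expansion of $K_{n_{3},n_{2}}\left( x,t\right) $ into the integral and exchange the (finite) summation with the integration. For each $k$ the inner integral becomes
\[
\int_{0}^{1}G_{k}\left( x,t\right) K_{n_{1}}\left( t,y\right) dt=\frac{1}{k+1}
\int_{0}^{1}K_{k}\left( x,t\right) K_{n_{1}}\left( t,y\right) dt=\frac{1}{k+1}
K_{k,n_{1}}\left( x,y\right) ,
\]
that is, up to the factor $1/\left( k+1\right) $, exactly the kernel of the two-fold composition $M_{k}\circ M_{n_{1}}$. Applying Theorem~\ref{theorem-beautyful-representation-r=2} a second time expands this in the $G_{j}$ basis; substituting back and interchanging the order of the two summations (the inner one now running over $j\leq k\leq \min \{n_{2},n_{3}\}$) gives
\[
K_{n_{3},n_{2},n_{1}}\left( x,y\right) =\frac{\left( n_{3}+1\right) !\left(
n_{2}+1\right) !\left( n_{1}+1\right) !}{\left( n_{3}+n_{2}+1\right) !}
\sum_{j=0}^{\min \{n_{1},n_{2},n_{3}\}}\binom{n_{1}}{j}G_{j}\left( x,y\right)
\sum_{k=j}^{\min \{n_{2},n_{3}\}}\frac{\binom{n_{3}}{k}\binom{n_{2}}{k}\binom{
k}{j}\,k!}{\left( k+n_{1}+1\right) !}.
\]

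The main obstacle is the evaluation of the inner sum in closed form. Writing $\binom{n_{3}}{k}\binom{n_{2}}{k}\binom{k}{j}k!=\frac{n_{3}!\,n_{2}!}{j!\left( n_{3}-k\right) !\left( n_{2}-k\right) !\left( k-j\right) !}$ and substituting $k=j+i$, the sum turns into a terminating hypergeometric series of type ${}_{2}F_{1}\!\left( -\left( n_{3}-j\right) ,-\left( n_{2}-j\right) ;n_{1}+j+2;1\right) $, which the Chu--Vandermonde identity evaluates. One obtains
\[
\sum_{k=j}^{\min \{n_{2},n_{3}\}}\frac{\binom{n_{3}}{k}\binom{n_{2}}{k}\binom{
k}{j}\,k!}{\left( k+n_{1}+1\right) !}=\binom{n_{3}}{j}\binom{n_{2}}{j}\,j!\,
\frac{\left( n_{3}+n_{2}+n_{1}+1-j\right) !}{\left( n_{3}+n_{1}+1\right)
!\left( n_{2}+n_{1}+1\right) !}.
\]

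Finally I would substitute this evaluation and use the identity $\frac{j!\left( n_{3}+n_{2}+n_{1}+1-j\right) !}{\left( n_{3}+n_{2}+n_{1}+1\right) !}=\binom{n_{3}+n_{2}+n_{1}+1}{j}^{-1}$ to turn the $j$-dependent factor into $\binom{n_{3}}{j}\binom{n_{2}}{j}\binom{n_{1}}{j}\big/\binom{n_{3}+n_{2}+n_{1}+1}{j}$, while the remaining factorials collect into the prefactor $\frac{\left( n_{3}+1\right) !\left( n_{2}+1\right) !\left( n_{1}+1\right) !\left( n_{3}+n_{2}+n_{1}+1\right) !}{\left( n_{3}+n_{2}+1\right) !\left( n_{3}+n_{1}+1\right) !\left( n_{2}+n_{1}+1\right) !}$. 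Replacing $G_{j}\left( x,y\right) $ by $\sum_{\ell =0}^{j}p_{j,\ell }\left( x\right) p_{j,\ell }\left( y\right) $ then yields exactly the claimed representation. The only genuinely non-routine point is the hypergeometric evaluation; the fact that the final answer is symmetric in $n_{1},n_{2},n_{3}$, whereas the derivation singles out $n_{1}$, provides a useful consistency check.
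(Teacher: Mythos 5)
Your argument is correct. Note first that this paper does not prove Theorem~\ref{theorem-beautyful-representation-r=3} at all: it is quoted from \cite{AAHR-Kernels for composition-JMAA-2026} as a known result, so there is no in-paper proof to compare against. Your route --- writing $K_{n_{3},n_{2},n_{1}}(x,y)=\int_{0}^{1}K_{n_{3},n_{2}}(x,t)K_{n_{1}}(t,y)\,dt$, expanding $K_{n_{3},n_{2}}$ by Theorem~\ref{theorem-beautyful-representation-r=2}, recognizing $\int_{0}^{1}G_{k}(x,t)K_{n_{1}}(t,y)\,dt=\frac{1}{k+1}K_{k,n_{1}}(x,y)$, and applying Theorem~\ref{theorem-beautyful-representation-r=2} a second time --- is exactly the natural way to bootstrap the two-fold formula, and it checks out. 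I verified the two nontrivial steps: the rearranged double sum you display is what one gets after the interchange, and the inner sum, after the shift $k=j+i$, is indeed the terminating series ${}_{2}F_{1}\bigl(-(n_{3}-j),-(n_{2}-j);n_{1}+j+2;1\bigr)$ times its $i=0$ term, which Chu--Vandermonde evaluates to $\frac{(n_{1}+n_{2}+2)_{n_{3}-j}}{(n_{1}+j+2)_{n_{3}-j}}$; this reproduces your closed form $\binom{n_{3}}{j}\binom{n_{2}}{j}\,j!\,\frac{(n_{1}+n_{2}+n_{3}+1-j)!}{(n_{3}+n_{1}+1)!(n_{2}+n_{1}+1)!}$, and the final bookkeeping with $j!\,(n_{1}+n_{2}+n_{3}+1-j)!=(n_{1}+n_{2}+n_{3}+1)!\big/\binom{n_{1}+n_{2}+n_{3}+1}{j}$ gives precisely the stated kernel (I also confirmed the case $n_{1}=n_{2}=n_{3}=1$ numerically). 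The one point worth making explicit in a write-up is that $\frac{1}{k+1}K_{k}=G_{k}$ is what lets you reuse Theorem~\ref{theorem-beautyful-representation-r=2} for the cross term, and that the restored symmetry in $n_{1},n_{2},n_{3}$ is, as you say, a good sanity check on the asymmetric derivation.
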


In this paper we prove a generalization of Theorem~\ref%
{theorem-beautyful-representation-r=2} in the multivariate case. In a
forth-coming paper the corresponding generalization of Theorem~\ref%
{theorem-beautyful-representation-r=3} will be given. Moreover, the authors
will derive a representation of its $r$-th iterate of the multivariate
Bernstein--Durrmeyer operator $M_{n}$ as a linear combination of the
multivariate operators $M_{k}$, for $k=0,1,\ldots ,n$.

\section{Notation}

We start with notation. Let $d$ be a positive integer. The standard simplex
in $\mathbb{R}^{d}$ is given by 
\begin{equation*}
\mathbb{S}^{d}:=\left \{ x=\left( x_{1},\ldots ,x_{d}\right) \in \mathbb{R}%
^{d}:\ 0\leq x_{1},\ldots ,x_{d}\leq 1,\ x_{1}+\cdots +x_{d}\leq 1\right \} .
\end{equation*}%
We will also use the barycentric coordinates 
\begin{equation*}
\mathbf{x}=\left( x_{0},x_{1},\ldots ,x_{d}\right) ,\qquad
x_{0}:=1-x_{1}-\cdots -x_{d}.
\end{equation*}%
Throughout the paper, we will use standard multi-index notation. For
example, for $x=(x_{1},\ldots ,x_{d})\in \mathbb{R}^{d}$ and $\beta =(\beta
_{1},\ldots ,\beta _{d})\in \mathbb{Z}^{d}$ we write 
\begin{equation*}
x^{\beta }:=x_{1}^{\beta _{1}}\cdots x_{d}^{\beta _{d}},\qquad \left \vert
\beta \right \vert :=\beta _{1}+\cdots +\beta _{d},\qquad \beta !:=\beta
_{1}!\cdots \beta _{d}!.
\end{equation*}%
Multinomial coefficients are defined by ${\binom{\left \vert \beta
\right
\vert }{\beta }}:=\frac{|\beta |!}{\beta !}$ if $\beta =(\beta
_{1},\ldots ,\beta _{d})\in \mathbb{N}_{0}^{d}$ and ${\binom{\left \vert
\beta \right
\vert }{\beta }}:=0$ if (at least)\ one of the components $%
\beta _{\nu }$ is negative. We will also use binomial coefficients ${\binom{s%
}{m}}$ with $s\in \mathbb{R}$ and $m\in \mathbb{N}_{0}$ which are defined by 
${\binom{s}{m}}:=\frac{s\left( s-1\right) \cdots \left( s-m+1\right) }{m!}$
for $m\in \mathbb{N}$ and ${\binom{s}{0}}:=1$.

For given $\alpha =\left( \alpha _{0},\alpha _{1},\ldots ,\alpha _{d}\right)
\in \mathbb{N}_{0}^{d+1}$, the $d$-variate Bernstein basis polynomials are 
\begin{equation*}
B_{\alpha }\left( x\right) :={\binom{\left\vert \alpha \right\vert }{\alpha }%
}\mathbf{x}^{\alpha }=\frac{\left\vert \alpha \right\vert !}{\alpha
_{0}!\alpha _{1}!\cdots \alpha _{d}!}\,x_{0}^{\alpha _{0}}x_{1}^{\alpha
_{1}}\cdots x_{d}^{\alpha _{d}}.
\end{equation*}%
We use the inner product 
\begin{equation*}
\left\langle f,g\right\rangle :=\int_{\mathbb{S}^{d}}f\left( x\right)
\,g\left( x\right) \,dx.
\end{equation*}%
For a function $f$ on $\mathbb{S}^{d}$, the Bernstein-Durrmeyer operator of
degree $n\in \mathbb{N}$ is defined by the formula 
\begin{equation*}
\left( M_{n}f\right) \left( x\right) :=\sum_{\left\vert \alpha \right\vert
=n}\frac{\left\langle f,B_{\alpha }\right\rangle }{\left\langle \mathbf{1}%
,B_{\alpha }\right\rangle }B_{\alpha }\left( x\right) .
\end{equation*}%
Here, $\mathbf{1}$ denotes the constant function equal to one. The
Bernstein-Durrmeyer operator is usually considered on the domain $%
L^{p}\left( \mathbb{S}^{d}\right) $, $1\leq p<\infty $, which is the
weighted $L^{p}$ space consisting of all measurable functions on $\mathbb{S}%
^{d}$ such that 
\begin{equation*}
\Vert f\Vert _{p}:=\left( \int_{\mathbb{S}^{d}}\left\vert f\left( x\right)
\right\vert ^{p}\,dx\right) ^{1/p}
\end{equation*}%
is finite. For $p=\infty $, the space $C\left( \mathbb{S}^{d}\right) $ of
continuous functions is considered instead. Note that the spaces $%
L^{p}\left( \mathbb{S}^{d}\right) $, $1<p<\infty $, and $C\left( \mathbb{S}%
^{d}\right) $ are subspaces of $L^{1}\left( \mathbb{S}^{d}\right) $.

\section{The main result and its proof}

For $x,y\in \mathbb{S}^{d}$, we define the multivariate kernel $%
K_{m,n}\left( x,y\right) $ via the definition of $M_{m}\circ M_{n}$: 
\begin{eqnarray*}
&&\left( \left( M_{m}\circ M_{n}\right) f\right) \left( x\right)  \\
&=&\int_{\mathbb{S}^{d}}f\left( y\right) \sum_{\left\vert \beta \right\vert
=m}\sum_{\left\vert \alpha \right\vert =n}B_{\alpha }\left( y\right)
B_{\beta }\left( x\right) \frac{1}{\left\langle \mathbf{1},B_{\alpha
}\right\rangle \left\langle \mathbf{1},B_{\beta }\right\rangle }\left( \int_{%
\mathbb{S}^{d}}B_{\alpha }\left( t\right) B_{\beta }\left( t\right)
dt\right) dy \\
=: &&\int_{\mathbb{S}^{d}}f\left( y\right) K_{m,n}\left( x,y\right) dy,
\end{eqnarray*}%
where $\mathbf{1=}\left( 1,1,\ldots ,1\right) \in \mathbb{N}^{d+1}$. The
following theorem is the main result of this note.

\begin{theorem}
For positive integers $m,n$ and $x,y\in \mathbb{R}^{d}$, the kernel $%
K_{m,n}\left( x,y\right) $ has the representation 
\begin{equation*}
K_{m,n}\left( x,y\right) =\frac{\left( m+d\right) !\left( n+d\right) !}{%
\left( m+n+d\right) !}\sum_{\ell \geq 0}\binom{m}{\left \vert \ell \right
\vert }\binom{n}{\left \vert \ell \right \vert }B_{\ell }\left( \mathbf{x}%
\right) B_{\ell }\left( \mathbf{y}\right) .
\end{equation*}
\end{theorem}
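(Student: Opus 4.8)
The plan is to compute the kernel directly from its definition
\begin{equation*}
K_{m,n}\left( x,y\right) =\sum_{\left\vert \beta \right\vert =m}\sum_{\left\vert \alpha \right\vert =n}\frac{B_{\alpha }\left( y\right) B_{\beta }\left( x\right) }{\left\langle \mathbf{1},B_{\alpha }\right\rangle \left\langle \mathbf{1},B_{\beta }\right\rangle }\left\langle B_{\alpha },B_{\beta }\right\rangle
\end{equation*}
and reduce the three simplex integrals to explicit multinomial quantities. First I would recall the standard Dirichlet integral on the simplex, namely $\left\langle \mathbf{1},B_{\alpha }\right\rangle =\int_{\mathbb{S}^{d}}\mathbf{x}^{\alpha }\frac{|\alpha |!}{\alpha !}\,dx=\frac{|\alpha |!}{\alpha !}\cdot \frac{\alpha !}{(|\alpha |+d)!}=\frac{n!}{(n+d)!}$, which is independent of the particular $\alpha $ with $|\alpha |=n$; similarly $\left\langle \mathbf{1},B_{\beta }\right\rangle =\frac{m!}{(m+d)!}$. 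In the same way the cross term is $\left\langle B_{\alpha },B_{\beta }\right\rangle =\frac{|\alpha |!\,|\beta |!}{\alpha !\,\beta !}\cdot \frac{(\alpha +\beta )!}{(m+n+d)!}=\frac{m!\,n!}{(m+n+d)!}\binom{\alpha +\beta }{\alpha }$, where $\binom{\alpha +\beta }{\alpha }=\prod_{\nu =0}^{d}\binom{\alpha _{\nu }+\beta _{\nu }}{\alpha _{\nu }}$. Substituting these closed forms pulls out the prefactor $\frac{(m+d)!(n+d)!}{(m+n+d)!}$ and leaves the combinatorial sum
\begin{equation*}
K_{m,n}\left( x,y\right) =\frac{(m+d)!(n+d)!}{(m+n+d)!}\sum_{\left\vert \beta \right\vert =m}\sum_{\left\vert \alpha \right\vert =n}\mathbf{y}^{\alpha }\mathbf{x}^{\beta }\prod_{\nu =0}^{d}\binom{\alpha _{\nu }+\beta _{\nu }}{\alpha _{\nu }}.
\end{equation*}

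The core of the argument is therefore a purely algebraic identity: the double multi-index sum above must equal $\sum_{\ell \geq 0}\binom{m}{|\ell |}\binom{n}{|\ell |}B_{\ell }(\mathbf{x})B_{\ell }(\mathbf{y})$. My plan here is to introduce a new index $\ell =(\ell _{0},\dots ,\ell _{d})$ tracking the componentwise overlap between $\alpha $ and $\beta $, and to use the Vandermonde-type convolution $\binom{\alpha _{\nu }+\beta _{\nu }}{\alpha _{\nu }}$ together with a generating-function bookkeeping argument. Concretely I would encode the identity through the product over coordinates of one-variable generating functions and collect the coefficient of $\mathbf{x}^{\beta }\mathbf{y}^{\alpha }$; the product $\prod_{\nu }\binom{\alpha _{\nu }+\beta _{\nu }}{\alpha _{\nu }}\mathbf{x}^{\beta }\mathbf{y}^{\alpha }$ should factor across the $d+1$ barycentric coordinates, so that the problem reduces coordinate-by-coordinate to recognizing the appearance of the Bernstein basis $B_{\ell }$ on both sides.

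The cleanest route is probably to verify the identity via generating functions rather than by manipulating the sums directly. I would consider $\prod_{\nu =0}^{d}(1-u\,x_{\nu }-v\,y_{\nu }+uv\,x_{\nu }y_{\nu })^{-1}$ or the appropriate analog, extract the coefficient of $u^{m}v^{n}$, and match it against the claimed right-hand side; the factors $\binom{m}{|\ell |}$ and $\binom{n}{|\ell |}$ arise from distributing the total degrees $m$ and $n$ among the ``matched'' part $\ell $ and the remaining free mass. The identity $\sum_{\alpha ,\beta }\prod_{\nu }\binom{\alpha _{\nu }+\beta _{\nu }}{\alpha _{\nu }}\mathbf{x}^{\beta }\mathbf{y}^{\alpha }$ restricted to $|\alpha |=n$, $|\beta |=m$ then unfolds into the stated Bernstein product after substituting $x_{0}=1-x_{1}-\cdots -x_{d}$ and $y_{0}=1-y_{1}-\cdots -y_{d}$ and re-expanding, which accounts for why the sum over $\ell $ has no upper bound written explicitly (the multinomial coefficient $\binom{|\ell |}{\ell }$ inside $B_{\ell }$ and the factors $\binom{m}{|\ell |}$, $\binom{n}{|\ell |}$ force $|\ell |\le \min \{m,n\}$ automatically).

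The main obstacle I anticipate is the combinatorial identity itself: unlike the univariate case, where $\sum_{k}\binom{m}{k}\binom{n}{k}\sum_{\ell }p_{k,\ell }(x)p_{k,\ell }(y)$ follows from a single Vandermonde convolution, here the matching of the product $\prod_{\nu }\binom{\alpha _{\nu }+\beta _{\nu }}{\alpha _{\nu }}$ against the double multinomial structure of $B_{\ell }(\mathbf{x})B_{\ell }(\mathbf{y})$ couples all $d+1$ coordinates through the constraints $|\alpha |=n$ and $|\beta |=m$. Getting the bookkeeping of these two linear constraints to interact correctly with the per-coordinate factorization — and verifying that the emergent coefficients collapse to exactly $\binom{m}{|\ell |}\binom{n}{|\ell |}$ rather than some messier expression — is where the real work lies. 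A sanity check against Theorem~\ref{theorem-beautyful-representation-r=2} in the case $d=1$ (where $B_{\ell }$ reduces to $p_{k,\ell }$ and $|\ell |$ to a single index $k$) would confirm that the normalization and the ranges are correct before committing to the full multivariate computation.
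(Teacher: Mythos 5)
There is a genuine gap, and also a concrete error in the one formula you do write down explicitly. First the error: after substituting the closed forms of the three integrals, the surviving summand is $\frac{(m+d)!(n+d)!}{(m+n+d)!}\,B_{\alpha }(\mathbf{y})B_{\beta }(\mathbf{x})\prod_{\nu }\binom{\alpha _{\nu }+\beta _{\nu }}{\alpha _{\nu }}$, i.e.\ the full Bernstein basis functions (with their multinomial coefficients $\binom{n}{\alpha }$, $\binom{m}{\beta }$) remain, whereas your display has only the bare monomials $\mathbf{y}^{\alpha }\mathbf{x}^{\beta }$. These are not equal: e.g.\ for $d=1$, $m=1$, $n=2$, $x=y=1/2$ your expression evaluates to $3/4$ while the true kernel value is $1$. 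So the "purely algebraic identity" you set out to prove is not the right one.

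The larger problem is that the heart of the proof — collapsing the double sum over $\alpha ,\beta $ with the coupling factor $\prod_{\nu }\binom{\alpha _{\nu }+\beta _{\nu }}{\alpha _{\nu }}$ into the single sum $\sum_{\ell }\binom{m}{|\ell |}\binom{n}{|\ell |}B_{\ell }(\mathbf{x})B_{\ell }(\mathbf{y})$ — is never carried out; you explicitly defer it to a generating-function argument "or the appropriate analog" and acknowledge that this is "where the real work lies." Moreover, the specific generating function you propose cannot work even in principle: $1-ux_{\nu }-vy_{\nu }+uvx_{\nu }y_{\nu }=(1-ux_{\nu })(1-vy_{\nu })$, so $\prod_{\nu }(1-ux_{\nu }-vy_{\nu }+uvx_{\nu }y_{\nu })^{-1}$ factors into two independent geometric series in $u$ and $v$ and produces no coupling between $\alpha $ and $\beta $ at all. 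For comparison, the paper resolves this step by writing $\frac{(\alpha +\beta )!}{\alpha !}=\left( \mathbf{D}^{\beta }\mathbf{t}^{\alpha +\beta }\right) _{\mid \mathbf{t}=\mathbf{1}}$, using the multinomial theorem to collapse the $\alpha $-sum into $\mathbf{t}^{\beta }\left( y_{0}t_{0}+\cdots +y_{d}t_{d}\right) ^{n}$ before differentiating, applying the Leibniz rule coordinatewise to produce the index $\ell \leq \beta $, and then collapsing the remaining $\beta $-sum with a second application of the multinomial theorem. Some device of this kind (or a correctly coupled generating function such as $\prod_{\nu }(1-ux_{\nu }-vy_{\nu })^{-1}$, adapted to carry the extra multinomial weights) is indispensable, and without it the proposal does not constitute a proof.
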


\begin{proof}
For positive integers $m,n$ and $x,y\in \mathbb{R}^{d}$, we have 
\begin{equation*}
K_{m,n}\left( x,y\right) =\sum_{\left\vert \beta \right\vert
=m}\sum_{\left\vert \alpha \right\vert =n}B_{\alpha }\left( y\right)
B_{\beta }\left( x\right) \frac{1}{\left\langle \mathbf{1},B_{\alpha
}\right\rangle \left\langle \mathbf{1},B_{\beta }\right\rangle }\int_{%
\mathbb{S}^{d}}B_{\alpha }\left( t\right) B_{\beta }\left( t\right) dt.
\end{equation*}%
Note that $\left\langle \mathbf{1},B_{\alpha }\right\rangle =\left\vert
\alpha \right\vert !/\left( \left\vert \alpha \right\vert +d\right) !$ (see,
e.g., \cite[Page~147]{Berens-Schmidt-Xu-JAT-1992}). Since 
\begin{equation*}
\int_{S^{d}}B_{\alpha }\left( t\right) B_{\beta }\left( t\right) dt=\frac{%
\binom{\left\vert \alpha \right\vert }{\alpha }\binom{\left\vert \beta
\right\vert }{\beta }}{\binom{\left\vert \alpha +\beta \right\vert }{\alpha
+\beta }}\left\langle \mathbf{1},B_{\alpha +\beta }\right\rangle 
\end{equation*}%
we obtain 
\begin{eqnarray*}
K_{m,n}\left( x,y\right)  &=&\frac{\left( m+d\right) !\left( n+d\right)
!\left( m+n\right) !}{m!n!\left( m+n+d\right) !}\sum_{\left\vert \beta
\right\vert =m}\sum_{\left\vert \alpha \right\vert =n}B_{\alpha }\left(
y\right) B_{\beta }\left( x\right) \frac{\binom{\left\vert \alpha
\right\vert }{\alpha }\binom{\left\vert \beta \right\vert }{\beta }}{\binom{%
\left\vert \alpha +\beta \right\vert }{\alpha +\beta }} \\
&=&\frac{\left( m+d\right) !\left( n+d\right) !}{\left( m+n+d\right) !}%
\sum_{\left\vert \beta \right\vert =m}\frac{1}{\beta !}B_{\beta }\left(
x\right) \sum_{\left\vert \alpha \right\vert =n}B_{\alpha }\left( y\right) 
\frac{\left( \alpha +\beta \right) !}{\alpha !}.
\end{eqnarray*}%
With the notation\ $\mathbf{t}=\left( t_{0},t_{1},\ldots ,t_{d}\right) $\
and $\mathbf{D}^{\beta }=\prod_{\nu =0}^{d}\left( \frac{\partial }{\partial
t_{\nu }}\right) ^{\beta _{\nu }}$, the inner sum is equal to 
\begin{eqnarray*}
\sum_{\left\vert \alpha \right\vert =n}B_{\alpha }\left( y\right) \frac{%
\left( \alpha +\beta \right) !}{\alpha !} &=&\sum_{\left\vert \alpha
\right\vert =n}B_{\alpha }\left( y\right) \left( \mathbf{D}^{\beta }\mathbf{t%
}^{\alpha +\beta }\right) _{\mid \mathbf{t=1}}=\mathbf{D}^{\beta }\left( 
\mathbf{t}^{\beta }\sum_{\left\vert \alpha \right\vert =n}B_{\alpha }\left(
y\right) \mathbf{t}^{\alpha }\right) _{\mid \mathbf{t=1}} \\
&=&\left( \prod_{\nu =0}^{d}\left( \frac{\partial }{\partial t_{\nu }}%
\right) ^{\beta _{\nu }}\left( t_{0}^{\beta _{0}}t_{1}^{\beta _{1}}\cdots
t_{d}^{\beta _{d}}\left( y_{0}t_{0}+y_{1}t_{1}+\cdots +y_{d}t_{d}\right)
^{n}\right) \right) _{\mid \mathbf{t=1}} \\
&=&\sum_{\ell \leq \beta }n^{\underline{\left\vert \ell \right\vert }%
}\prod_{\nu =0}^{d}\left( \binom{\beta _{\nu }}{\ell _{\nu }}\frac{\beta
_{\nu }!}{\ell _{\nu }!}y_{\nu }^{\ell _{\nu }}\right)  \\
&=&\sum_{\ell \leq \beta }n^{\underline{\left\vert \ell \right\vert }}\frac{1%
}{\left\vert \ell \right\vert !}B_{\ell }\left( y\right) \beta !\prod_{\nu
=0}^{d}\binom{\beta _{\nu }}{\ell _{\nu }}.
\end{eqnarray*}%
Here, $\ell \leq \beta $ means summation over all nonnegative integers $\ell
_{\nu }$ such that $\ell _{\nu }\leq \beta _{\nu }$, for $\nu =0,\ldots ,d$.
Therefore, 
\begin{eqnarray*}
&&\left( \frac{\left( m+d\right) !\left( n+d\right) !}{\left( m+n+d\right) !}%
\right) ^{-1}K_{m,n}\left( x,y\right)  \\
&=&\sum_{\left\vert \beta \right\vert =m}B_{\beta }\left( x\right)
\sum_{\ell \leq \beta }\binom{n}{\left\vert \ell \right\vert }B_{\ell
}\left( y\right) \prod_{\nu =0}^{d}\binom{\beta _{\nu }}{\ell _{\nu }} \\
&=&\sum_{\ell \geq 0}\binom{n}{\left\vert \ell \right\vert }B_{\ell }\left(
y\right) \sum_{\left\vert \beta \right\vert =m}B_{\beta }\left( x\right)
\prod_{\nu =0}^{d}\binom{\beta _{\nu }}{\ell _{\nu }} \\
&=&\sum_{\ell \geq 0}\binom{n}{\left\vert \ell \right\vert }B_{\ell }\left(
y\right) \sum_{\left\vert \beta \right\vert =m}{\binom{\left\vert \beta
\right\vert }{\beta }}\mathbf{x}^{\beta }\prod_{\nu =0}^{d}\frac{\left(
\beta _{\nu }\right) ^{\underline{\ell _{\nu }}}}{\ell _{\nu }!} \\
&=&\sum_{\ell \geq 0}\binom{n}{\left\vert \ell \right\vert }B_{\ell }\left(
y\right) m^{\underline{\left\vert \ell \right\vert }}\left( \prod_{\nu
=0}^{d}\frac{1}{\ell _{\nu }!}\right) \sum_{\substack{ \left\vert \beta
\right\vert =m \\ \beta \geq \ell }}{\binom{m-\left\vert \ell \right\vert }{%
\beta -\ell }}\mathbf{x}^{\beta },
\end{eqnarray*}%
where $\ell \geq 0$ means summation over all nonnegative integers $\ell
_{\nu }$ $\left( \nu =0,\ldots ,d\right) $. Observing that 
\begin{equation*}
\sum_{\substack{ \left\vert \beta \right\vert =m \\ \beta \geq \ell }}{%
\binom{m-\left\vert \ell \right\vert }{\beta -\ell }}\mathbf{x}^{\beta
}=\sum_{\left\vert \beta \right\vert =m-\left\vert \ell \right\vert }{\binom{%
m-\left\vert \ell \right\vert }{\beta }}\mathbf{x}^{\beta +\ell }=\mathbf{x}%
^{\ell }\sum_{\left\vert \beta \right\vert =m-\left\vert \ell \right\vert }{%
\binom{m-\left\vert \ell \right\vert }{\beta }}B_{\beta }\left( x\right) =%
\mathbf{x}^{\ell }
\end{equation*}%
and using that $\prod_{\nu =0}^{d}\frac{1}{\ell _{\nu }!}=\frac{1}{%
\left\vert \ell \right\vert !}\binom{\left\vert \ell \right\vert }{\ell }$
we obtain 
\begin{equation*}
\left( \frac{\left( m+d\right) !\left( n+d\right) !}{\left( m+n+d\right) !}%
\right) ^{-1}K_{m,n}\left( x,y\right) =\sum_{\ell \geq 0}\binom{m}{%
\left\vert \ell \right\vert }\binom{n}{\left\vert \ell \right\vert }B_{\ell
}\left( y\right) \binom{\left\vert \ell \right\vert }{\ell }\mathbf{x}^{\ell
}.
\end{equation*}%
Since $\binom{\left\vert \ell \right\vert }{\ell }\mathbf{x}^{\ell }=B_{\ell
}\left( x\right) $ the proof is completed.
\end{proof}

\bigskip

\end{document}